\documentclass[10pt,english]{smfart}

\usepackage[T1]{fontenc}
\usepackage[francais]{babel}

\usepackage{tikz}
\usetikzlibrary{matrix,positioning}
\usepackage{amssymb,url,xspace,smfthm,amsmath,amsfonts,enumitem}

\DeclareFontFamily{U}{wncy}{}
\DeclareFontShape{U}{wncy}{m}{n}{
<5>wncyr5
<6>wncyr6
<7>wncyr7
<8>wncyr8
<9>wncyr9
<10>wncyr10
<11>wncyr10
<12>wncyr6
<14>wncyr7
<17>wncyr8
<20>wncyr10
<25>wncyr10}{}
\DeclareMathAlphabet{\cyr}{U}{wncy}{m}{n}

\newcommand{\BibTeX}{{\scshape Bib}\kern-.08em\TeX}
\newcommand{\T}{\S\kern .15em\relax }
\newcommand{\AMS}{$\mathcal{A}$\kern-.1667em\lower.5ex\hbox
        {$\mathcal{M}$}\kern-.125em$\mathcal{S}$}

\setlength{\parskip}{5pt}
\tolerance 400
\pretolerance 200
\usepackage{tikz}
\usetikzlibrary{matrix}

\begin{document}
\title[Insufficiency of the \'etale Brauer--Manin obstruction]{Insufficiency of the \'etale Brauer--Manin obstruction: towards a simply connected example}
\date{}
\author{Arne Smeets}
\address{Imperial College London, Department of Mathematics, London SW7 2AZ, United Kingdom \emph{and} University of Leuven, Departement Wiskunde, Celestijnenlaan 200B, 3001 Heverlee, Belgium} 
\email{arnesmeets@gmail.com}

\begin{abstract} Since Poonen's construction of a variety $X$ defined over a number field $k$ for which $X(k)$ is empty and the \'etale Brauer--Manin set $X(\mathbf{A}_k)^\text{Br,\'et}$ is not, several other examples of smooth, projective varieties have been found for which the \'etale Brauer--Manin obstruction does not explain the failure of the Hasse principle. All known examples are constructed using ``Poonen's trick'', i.e. they have the distinctive feature of being fibrations over a higher genus curve; in particular, their Albanese variety is non-trivial. 

In this paper, we construct examples for which the Albanese variety is trivial. The new geometric ingredient in our construction is the appearance of Beauville surfaces. Assuming the $abc$ conjecture and using geometric work of Campana on orbifolds, we also prove the existence of an example which is simply connected.
\end{abstract}
\maketitle
\tableofcontents
\section{Introduction}

Let $k$ be a number field. Let $\Omega_k$ be the set of places of $k$; for each $v \in \Omega_k$, denote by $k_v$ the corresponding completion. Let $\mathbf{A}_k$ be the ad\`ele ring of $k$. Let $X$ be a smooth, projective, geometrically integral $k$-variety and let $\mathrm{Br}(X) = H^2_{\text{\'et}}(X,\mathbf{G}_m)$ be its Brauer group. Manin defined the so-called \emph{Brauer--Manin pairing} between $X(\mathbf{A}_k)$, the set of adelic points of $X$, and the Brauer group of $X$; it is given by $$X(\mathbf{A}_k) \times \mathrm{Br}(X) \to \mathbf{Q}/\mathbf{Z}: ((P_v)_{v \in \Omega_k},\alpha) \mapsto \sum_{v \in \Omega_k} j_v(\alpha(P_v)),$$ where $j_v: \mathrm{Br}(k_v) \to \mathbf{Q}/\mathbf{Z}$ is the local invariant (as defined in local class field theory). Denote by $X(\mathbf{A}_k)^{\text{Br}}$ the left kernel of this pairing; by the global reciprocity law in class field theory, the diagonal embedding $X(k) \to X(\mathbf{A}_k)$ induces an inclusion $X(k) \subseteq X(\mathbf{A}_k)^{\text{Br}}$. 

If $X(\mathbf{A}_k)^{\text{Br}} = \emptyset$ (and hence $X(k) = \emptyset$) whereas $X(\mathbf{A}_k) \neq \emptyset$, one says that  there is a Brauer--Manin obstruction to the Hasse principle. Manin proved in his 1970 ICM talk that the Brauer--Manin obstruction is the only obstruction to the Hasse principle for genus $1$ curves, assuming the finiteness of the Shafarevich--Tate group of the Jacobian. Many other positive statements of this type are now known; e.g. in their seminal work on Ch\^atelet surfaces, Colliot-Th\'el\`ene, Sansuc and Swinnerton-Dyer proved that the Brauer--Manin obstruction is the only obstruction to the Hasse principle and weak approximation for these varieties. 

In 1999, Skorobogatov was the first to construct an example of a smooth, projective variety for which the failure of the Hasse principle is \emph{not} explained by a Brauer--Manin obstruction (see \cite{Sk}); his example is a bi-elliptic surface over $\mathbf{Q}$. In his paper \cite[\S 3]{Sk}, he defined a refinement of the Brauer--Manin obstruction, the \emph{\'etale Brauer--Manin obstruction}, which explains the failure of the Hasse principle in his case. Subsequently, in \cite{poonen}, Poonen constructed the first examples of varieties (over any number field) for which the Hasse principle fails, and yet the \emph{\'etale} Brauer--Manin set, which is contained in the classical Brauer--Manin set, is non-empty. His examples are pencils of Ch\^atelet surfaces parametrized by a curve of genus at least $1$. Very recently, Harpaz and Skorobogatov \cite{hs} were the first to construct \emph{surfaces} (over any number field) for which the failure of the Hasse principle is not explained by an \'etale Brauer--Manin obstruction; these are pencils of curves parametrized by a curve with exactly one rational point (hence of genus at least $1$). Finally, in their paper \cite{cps}, Colliot-Th\'el\`ene, P\'al and Skorobogatov even managed to construct various families of quadrics which illustrate the insufficiency of the \'etale Brauer--Manin obstruction;  each of these examples maps again (dominantly) to a higher genus curve.

Therefore a distinctive feature common to all known examples of varieties for which the (\'etale) Brauer--Manin obstruction does not suffice to explain the failure of the Hasse principle, is the fact that they are fibrations over a ``base curve'' of genus at least $1$; in particular, their Albanese variety is non-trivial. It is natural to ask whether there exist examples for which the Albanese variety is trivial; to the best of the author's knowledge, no such examples have been found before. Such examples should exist if one believes Lang's conjectures: Sarnak and Wang \cite{sw} proved that the existence of simply connected varieties over number fields, with local points everywhere and without any rational points, would follow from these conjectures. These varieties are (smooth) hypersurfaces in $\mathbf{P}^4_{\mathbf{Q}}$, for which the Brauer--Manin obstruction and its refinements are known to vanish automatically. Constructing a simply connected example is one of the biggest challenges in the field.

In this paper, we will construct examples of smooth, projective varieties with trivial Albanese variety, such that the failure of the Hasse principle is not accounted for by the \'etale Brauer--Manin obstruction. The key new ingredient of our construction is very geometric: in \S 2, we construct (over an arbitrary number field) an explicit Beauville surface needed for our examples. In \S 3, we will produce these examples as families of quadrics or families of Ch\^atelet surfaces over such a Beauville surface. Finally, in \S 4 we will show, using geometric work of Campana, that the $abc$ conjecture implies the existence of a simply connected example.

\paragraph*{More notation and conventions.} See the beginning of this introduction for the definition of the Brauer--Manin obstruction; for the definition of the \emph{\'etale Brauer--Manin obstruction}, we refer to \cite[\S 3]{Sk}. 

For a smooth, projective and geometrically integral surface $S$ over an arbitrary field $k$, we use the classical notation $q(S) = \dim H^1(S,\mathcal{O}_S)$ for the irregularity, i.e. the dimension of the Albanese variety, and $p_g(S) = \dim H^2(S,\mathcal{O}_S)$ for the geometric genus. 

\paragraph*{Acknowledgements.} It is a great pleasure to thank my advisor, J.-L. Colliot-Th\'el\`ene, for proposing this problem and for his help and encouragement. I would also like to thank W. Castryck, F. Catanese, A. Javanpeykar and O. Wittenberg for useful suggestions, and D. Abramovich and P. Bruin for discussions on the subject of the appendix. The author was supported by a PhD fellowship of the Research Foundation Flanders (FWO Vlaanderen) and by ERC grant MOTMELSUM.
\section{An explicit Beauville surface}

Let $k$ be an infinite field, not of characteristic $2$. We will construct an explicit smooth, projective, geometrically integral surface $S$ of general type over $k$, with a rational point, and only finitely rational points if the field $k$ is finitely generated over $\mathbf{Q}$. Our surface belongs to the class of Beauville surfaces. These are ``fake quadrics'': surfaces of general type having the same numerical invariants as $\mathbf{P}^1 \times \mathbf{P}^1$.

Our construction is inspired by work of Bauer and Catanese on such surfaces (over $\mathbf{C}$) which appeared in \cite{bc}. We carry out the construction over any field of characteristic not $2$ in a very elementary way; this allows us to deal with some requirements of arithmetic nature as well. We will construct this Beauville surface $S$ starting from two smooth, projective curves over $k$, a curve $\mathcal{C}$ of genus $5$ and a curve $\mathcal{D}$ of genus $3$, both equipped with an action of the group $G = (\mathbf{Z}/2)^3$ such that $\mathcal{C}/G \cong \mathbf{P}^1_k$ and $\mathcal{D}/G \cong \mathbf{P}^1_k$, and such that the diagonal action of $G$ on $\mathcal{C} \times \mathcal{D}$ is free; we will then take $S$ to be the quotient $(\mathcal{C} \times \mathcal{D})/G$.

Consider three smooth conics $C_i$ over $k$ (for $i \in \{1,2,3\}$), each of them equipped with a dominant morphism $\pi_{C_i}: C_i \to \mathbf{P}^1_k$ of degree $2$, ramified above two $k$-rational points $P_i$ and $P_i'$ of $\mathbf{P}^1_k$, such that the following conditions are satisfied: \begin{itemize} \item[$-$]  the six points $P_1$, $P_1'$, $P_2$, $P_2'$, $P_3$ and $P_3'$ are pairwise distinct; \item[$-$] the sets $\pi_{C_i}(C_i(k)) \subseteq \mathbf{P}^1(k)$ have a point in common, not one of the $P_i$ or $P_i'$. \end{itemize} It is clear that such a triple of conics exists. Consider the smooth curve $$\mathcal{C} = C_1 \times_{\mathbf{P}^1_k} C_2 \times_{\mathbf{P}^1_k} C_3.$$ We certainly have $\mathcal{C}(k) \neq \emptyset$. The genus $g_{\mathcal{C}}$ of $\mathcal{C}$ can be calculated using the Riemann-Hurwitz theorem: the resulting morphism $\pi_{\mathcal{C}}: \mathcal{C} \to \mathbf{P}^1_k$ has degree $8$. The ramification divisor on $\mathcal{C}$ consists of $24$ points, four of them above each of $P_1$, $P_1'$, $P_2$, $P_2'$, $P_3$ and $P_3'$, and each of them has ramification degree equal to $2$. We get $$2 - 2g_{\mathcal{C}} = 8 \cdot (2 - 2g_{\mathbf{P}_1^k}) - 24$$ and hence $g_{\mathcal{C}} = 5$.

Consider three smooth conics $D_i$ over $k$ (for $i \in \{1,2,3\}$), each of them again equipped with a morphism $\pi_{D_i}: D_i \to \mathbf{P}^1_k$ of degree $2$, ramified above two $k$-rational points $Q_i$ and $Q_i'$ of $\mathbf{P}^1_k$, such that the following conditions are satisfied: \begin{itemize} \item[$-$] the points $Q_1$, $Q_1'$, $Q_2$, $Q_2'$ and $Q_3$ are pairwise distinct, and $Q_3' = Q_2'$; \item[$-$] the sets $\pi_{D_i}(D_i(k)) \subseteq \mathbf{P}^1(k)$ have a point in common, not one of the $Q_i$ or $Q_i'$. \end{itemize} It is again clear that such a triple of conics exists. Consider the curve $$D = D_1 \times_{\mathbf{P}^1_k} D_2 \times_{\mathbf{P}^1_k} D_3,$$ which has two singular points lying above $Q_2' = Q_3'$. Let $\mathcal{D}$ be the normalization of $D$, then clearly $\mathcal{D}(k) \neq \emptyset$. The induced morphism $\pi_{\mathcal{D}}: \mathcal{D} \to \mathbf{P}^1_k$ has degree $8$. The ramification divisor on $\mathcal{D}$ now consists of $20$ points, four above each of $Q_1$, $Q_1'$, $Q_2$, $Q_2'$ and $Q_3$, of ramification degree equal to $2$. The Riemann-Hurwitz theorem gives $$2 - 2g_{\mathcal{D}} = 8 \cdot (2 - 2g_{\mathbf{P}_1^k}) - 20$$ and hence $g_{\mathcal{D}} = 3$. 

We will now define an action of $G = (\mathbf{Z}/2)^3$ on $\mathcal{C}$ and $\mathcal{D}$ such that the diagonal action on $\mathcal{C} \times \mathcal{D}$ is free, and moreover $\mathcal{C}/G = \mathbf{P}^1_k$ and $\mathcal{D}/G = \mathbf{P}^1_k$. The points on $\mathcal{C}$ and $\mathcal{D}$ having non-trivial stabilizer are precisely the points on the ramification divisors; we need to define a $G$-action on both $\mathcal{C}$ and $\mathcal{D}$ such that the set of stabilizers for the action on $\mathcal{C}$ is disjoint from the set of stabilizers for the action on $\mathcal{D}$. Let $G = \langle a, b, c \rangle$ be a presentation. On each of the conics $C_i$ and $D_i$ we have an obvious involution; let us call these involutions $\sigma_i$ and $\tau_i$ respectively. Let $a$ act on $\mathcal{C}$ as $(\sigma_1,\mathbf{1},\mathbf{1})$ and on $D$ as $(\tau_1,\mathbf{1},\tau_3)$; let $b$ act on $\mathcal{C}$ as $(\mathbf{1},\sigma_2,\mathbf{1})$ and on $D$ as $(\tau_1,\tau_2,\tau_3)$; let $c$ act on $\mathcal{C}$ as $(\mathbf{1},\mathbf{1},\sigma_3)$ and on $D$ as $(\tau_1,\tau_2,\mathbf{1})$. This defines $G$-actions on $\mathcal{C}$ and $D$. The $G$-action on $D$ extends to $\mathcal{D}$. It is easy to check that $\mathcal{C}/G \cong \mathbf{P}^1_k$, that $\mathcal{D}/G \cong \mathbf{P}^1_k$ and that the diagonal action of $G$ on $\mathcal{C} \times \mathcal{D}$ is free.

\begin{prop} \label{beauville} With the above notation, define $$S = (\mathcal{C} \times \mathcal{D})/G.$$ Then $S$ is a smooth, projective, geometrically integral surface with a rational point satisfying $q(S) = p_g(S) = 0$. The projections $\pi_{\mathcal{C}}: \mathcal{C} \times \mathcal{D} \to \mathcal{C}$ and $\pi_{\mathcal{D}}: \mathcal
{C} \times \mathcal{D} \to \mathcal{D}$ induce two morphisms $\varphi_\mathcal{C}: S \to \mathbf{P}^1_k$ and $\varphi_{\mathcal{D}}: S \to \mathbf{P}^1_k$ having six and five double fibres respectively. 

If the field $k$ is finitely generated over $\mathbf{Q}$, then the set $S(k)$ is finite. \end{prop}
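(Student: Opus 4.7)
The plan is to exploit the fact that $p: \mathcal{C} \times \mathcal{D} \to S$ is a finite étale cover of degree $8$, since $G$ acts freely on the smooth projective variety $\mathcal{C} \times \mathcal{D}$. Accordingly, $S$ is smooth and projective; it is geometrically integral because its étale cover is (both curves being geometrically integral). A rational point on $S$ is provided by the image of any element of $\mathcal{C}(k) \times \mathcal{D}(k)$, which is non-empty by construction.

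To evaluate the cohomological invariants, I would use $H^i(S, \mathcal{O}_S) = H^i(\mathcal{C} \times \mathcal{D}, \mathcal{O})^G$, valid since $|G|$ is coprime to $\mathrm{char}(k)$, together with the Künneth formula. The equality $H^1(\mathcal{C}, \mathcal{O})^G = H^1(\mathcal{C}/G, \mathcal{O}) = H^1(\mathbf{P}^1, \mathcal{O}) = 0$ (and its analogue for $\mathcal{D}$), which holds for tame quotients, then immediately gives $q(S) = 0$. For $p_g(S) = \dim(H^1(\mathcal{C}, \mathcal{O}) \otimes H^1(\mathcal{D}, \mathcal{O}))^G$, the vanishing reduces, since $G = (\mathbf{Z}/2)^3$ is abelian and self-dual, to showing that no character of $G$ appears in both $H^1(\mathcal{C}, \mathcal{O})$ and $H^1(\mathcal{D}, \mathcal{O})$. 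This is the main technical step: one computes the $G$-isotypic decomposition of each space via the Chevalley--Weil formula, feeding in the explicit ramification data. The stabilizers of ramification points on $\mathcal{C}$ are $\langle a \rangle, \langle b \rangle, \langle c \rangle$, while those on $\mathcal{D}$ turn out to be precisely the remaining four order-$2$ subgroups of $G$, a complementarity which is exactly what is needed to force the two character supports to be disjoint.

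The morphisms $\varphi_\mathcal{C}$ and $\varphi_\mathcal{D}$ are obtained by descending the $G$-equivariant compositions $\mathcal{C} \times \mathcal{D} \to \mathcal{C} \to \mathcal{C}/G = \mathbf{P}^1_k$ (and its analogue for $\mathcal{D}$) through the quotient $S$. Over a branch point $P$ of $\pi_\mathcal{C}$, the scheme-theoretic fibre of $\mathcal{C} \to \mathbf{P}^1_k$ consists of four points each with tame ramification index $2$; dividing out by the $G$-action, the fibre of $\varphi_\mathcal{C}$ over $P$ becomes the non-reduced scheme $2 \cdot (\mathcal{D}/H_P)$, with $H_P$ the order-$2$ stabilizer of a preimage in $\mathcal{C}$. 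There are precisely the six branch points $P_1, P_1', P_2, P_2', P_3, P_3'$, hence six double fibres for $\varphi_\mathcal{C}$; the analogous analysis yields five double fibres for $\varphi_\mathcal{D}$, over $Q_1, Q_1', Q_2, Q_2' = Q_3', Q_3$.

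Finally, for the finiteness of $S(k)$ when $k$ is finitely generated over $\mathbf{Q}$, I would combine the Chevalley--Weil theorem with Faltings' theorem. Applied to the finite étale cover $p$, Chevalley--Weil produces a finite extension $k'/k$ such that every $k$-point of $S$ lifts to a $k'$-point of $\mathcal{C} \times \mathcal{D}$; whence $|S(k)| \leq |\mathcal{C}(k')| \cdot |\mathcal{D}(k')|$. Since $k'$ is also finitely generated over $\mathbf{Q}$ and both curves have genus $\geq 2$, Faltings' theorem forces each factor, and therefore $S(k)$ itself, to be finite.
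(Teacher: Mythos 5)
Your argument is correct in substance, and it reaches the proposition by a genuinely different route at the two non-formal steps, so let me compare. For $p_g(S)=0$ the paper sidesteps your main technical step entirely: since $\mathcal{C}\times\mathcal{D}\to S$ is \'etale of degree $8$ and $\chi(\mathcal{O}_{\mathcal{C}\times\mathcal{D}})=(g_{\mathcal{C}}-1)(g_{\mathcal{D}}-1)=8$, one gets $\chi(\mathcal{O}_S)=1$, hence $p_g(S)=q(S)$, and only the easy vanishing $q(S)=0$ (invariants, using $\mathcal{C}/G\cong\mathcal{D}/G\cong\mathbf{P}^1_k$) has to be computed. Your direct attack on $\bigl(H^1(\mathcal{C},\mathcal{O})\otimes H^1(\mathcal{D},\mathcal{O})\bigr)^G$ does succeed -- the character supports are indeed disjoint: the $\mathcal{C}$-support consists of the characters nontrivial on at least two of $a,b,c$, the $\mathcal{D}$-support of those nontrivial on exactly one of them -- but be aware that this is not a formal consequence of the complementarity of the stabilizer subgroups alone: for instance the character with $\chi(a)=\chi(b)=\chi(c)=-1$ is nontrivial on the $\mathcal{D}$-inertia subgroup $\langle abc\rangle$, and it is excluded from $H^1(\mathcal{D},\mathcal{O})$ only by the Chevalley--Weil count ($\langle abc\rangle$ is the inertia at just the two branch points $Q_1,Q_1'$, while a nonzero multiplicity needs at least four branch points with nontrivial restriction). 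So the computation you defer genuinely has to be carried out; the Euler-characteristic shortcut is cheaper and is what the paper does. Your description of the six and five double fibres (the latter because $Q_2'=Q_3'$) agrees with the paper. For the finiteness of $S(k)$ the paper argues through the double fibres: by \cite[Corollary 2.2]{paucity} and \cite[Corollary 2.4]{paucity} both images $\varphi_{\mathcal{C}}(S(k))$ and $\varphi_{\mathcal{D}}(S(k))$ are finite, and finiteness of $S(k)$ follows since $S\to\mathbf{P}^1_k\times\mathbf{P}^1_k$ is finite; your Chevalley--Weil-plus-Faltings argument applied to the \'etale cover $\mathcal{C}\times\mathcal{D}\to S$ is more direct and even shows that finiteness persists after arbitrary finite extensions of $k$, but note that the classical arithmetic Chevalley--Weil theorem is usually stated over number fields, whereas here $k$ is only assumed finitely generated over $\mathbf{Q}$; the extension to that setting is true but rests on a Hermite--Minkowski-type finiteness for arithmetic schemes (and likewise Faltings' theorem must be invoked in its finitely generated version, as the paper also implicitly does via \cite{paucity}), so this input deserves an explicit reference rather than being taken for granted.
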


\begin{proof} We defined the action of $G$ on $\mathcal{C}$ and $\mathcal{D}$ in such a way that the diagonal action of $G$ on the product $\mathcal{C} \times \mathcal{D}$ is free; this implies that the quotient $S$ is smooth. Since both $\mathcal{C}$ and $\mathcal{D}$ have a rational point, we immediately see that $S(k) \neq \emptyset$.

Let us calculate the basic invariants of the surface $S$; this is rather classical. We have $\chi(\mathcal{C} \times \mathcal{D}) =  (g_\mathcal{C} - 1)(g_\mathcal{D} - 1) = 8$ and also $\sharp G = 8$; therefore $\chi(S) = 1$ and hence $q(S) = p_g(S)$. Moreover, the irregularity $q(S)$ is equal to \begin{eqnarray*} H^0(S,\Omega^1_S) & = & \dim H^0(\mathcal{C} \times \mathcal{D}, \Omega^1_{\mathcal{C} \times \mathcal{D}})^G \\ & = & \dim H^0(\mathcal{C},\Omega^1_\mathcal{C})^G + \dim H^0(\mathcal{D},\Omega^1_\mathcal{D})^G \\ & = & \dim H^0(\mathcal{C}/G,\Omega^1_{\mathcal{C}/G})  + \dim H^0(\mathcal{D}/G,\Omega^1_{\mathcal{D}/G}) \\ & = & g_{\mathcal{C}/G} + g_{\mathcal{D}/G}.\end{eqnarray*}  Since $\mathcal{C}/G \cong \mathbf{P}^1_k$ and $\mathcal{D}/G \cong \mathbf{P}^1_k$, we get $q(S) = 0$ and therefore also $p_g(S) = 0$. 

The projection $\mathcal{C} \times \mathcal{D} \to \mathcal{C}$ induces a morphism $$\varphi_{\mathcal{C}}: S = (\mathcal{C} \times \mathcal{D})/G \to \mathcal{C}/G = \mathbf{P}^1_k$$ which has six branch points; these are just the points $P_i$ and $P_i'$ mentioned before.  Above each of these points, we get a double fibre (a quotient of $\mathcal{D}$). If $k$ is finitely generated over $\mathbf{Q}$, then it follows from from \cite[Corollary 2.2]{paucity} that $\varphi_{\mathcal{C}}(S(k))$ is finite; the key point in the argument is the fact that each point in $\varphi_{\mathcal{C}}(S(k))$ must be either one of the branch points, or the image of a rational point on one of only \emph{finitely} many double covers of $\mathbf{P}^1_k$ ramified over the points where $\varphi$ has a double fibre. These curves have genus at least $2$, and hence have only finitely rational points by Faltings' theorem (Mordell's conjecture). 

Similarly, the projection $\mathcal{C} \times \mathcal{D} \to \mathcal{D}$ induces a morphism $$\varphi_{\mathcal{D}}: S = (\mathcal{C} \times \mathcal{D})/G \to \mathcal{C}/G = \mathbf{P}^1_k$$ with five branch points (the $Q_i$ and $Q_i'$). Now \cite[Corollary 2.4]{paucity} implies that $\varphi_{\mathcal{D}}(S(k))$ is finite (the argument is a little bit more complicated in this case). Since both $\varphi_{\mathcal{C}}(S(k))$ and $\varphi_{\mathcal{D}}(S(k))$ are finite, it follows that $S(k)$ is finite as well. \end{proof}

This gives a very simple and explicit example of a surface of general type, with trivial Albanese variety, for which the number of rational points is finite even after any finite extension of the base field.

\section{Examples with trivial Albanese variety}

We will borrow some terminology from \cite[\S 2]{paucity}: a \emph{standard fibration} is a dominant and proper morphism of varieties, with smooth and geometrically integral generic fibre. 

The main idea of the next two propositions is the following: assume we are given a smooth, projective, geometrically integral variety $Y$ of dimension at least $2$ over a number field $k$, equipped with a standard fibration $f: Y \to \mathbf{P}^1_k$ such that $f(Y(k))$ is finite (but non-empty). If $k$ has at least one real place, then there is a very simple way of producing a quadric bundle $X \to Y$ of relative dimension $3$ such that $X(k) = \emptyset$ but $X(\mathbf{A}_k)^{\text{\'et,Br}} \neq \emptyset$. This is the content of Proposition \ref{quadrics}, inspired by the constructions used in \cite[\S 3]{cps}. If $k$ is arbitrary, one can instead produce a family of Ch\^atelet surfaces $X \to Y$ with the same properties. This is Proposition \ref{chatelet};  this slightly more complicated construction is inspired by Poonen's work \cite{poonen}.

\begin{prop} \label{quadrics} Let $k$ be a real number field. Let $Y$ be a smooth, projective, geometrically integral $k$-variety. Assume that there exists a standard fibration $f: Y \to \mathbf{P}^1_k$ such that $f(Y(k))$ is finite and non-empty. 

There exists a standard fibration in three-dimensional quadrics $g: Q \to \mathbf{P}^1_k$ such that the fibre product $X = Y \times_{\mathbf{P}^1_k} Q$ is a smooth, projective, geometrically integral $k$-variety for which $X(k) = \emptyset$ and $X(\mathbf{A}_k)^{\emph{\'et,Br}} \neq \emptyset$. \end{prop}

\begin{proof} Fix any point $M \in Y(k)$ and let $T = f(Y(k))$. Fix a real place $v_0$ of $k$. There exists a standard fibration in three-dimensional quadrics $g : Q \to \mathbf{P}^1_k$ satisfying the following properties (simultaneously): \begin{itemize} \item[$-$] $g$ has smooth fibres above the points where the fibre of $f$ is singular; \item[$-$] if $P \in T$, then the fibre $Q_P$ is smooth and has no $k_{v_0}$-points; \item[$-$] for each infinite place $v$ of $k$, there exists a point $A_v$ in the connected component of $Y(k_v)$ containing $M$ such that the fibre $Q_{f(A_v)}$ is smooth and \emph{does} have a $k_v$-point. \end{itemize} Constructing a fibration with these properties boils down to a basic interpolation problem.

We now claim that such a fibration does the job. It is clear that $X = Y \times_{\mathbf{P}^1_k} Q$ does not have any rational point: if $P \in T$, then the fibre $Q_P$ does not even have a $k_{v_0}$-point. Hence we only need to prove that $X(\mathbf{A}_k)^{\text{\'et,Br}} \neq \emptyset$, i.e. that there is no \'etale Brauer--Manin obstruction.

The fibre $X_M$ of the morphism $X \to Y$ is a smooth  three-dimensional quadric over $k$. Hence $X_M$ has a $k_v$-point for every \emph{finite} place $v \in \Omega_k$; for each of these places, choose any such point $Z_v$. Let us also choose a $k_v$-point $Z_v$ on $X_{A_v}$ for every \emph{infinite} place $v \in \Omega_k$; such a point exists since (by construction) $Q_{f(A_v)}(k_v) \neq \emptyset$ for every such $v$. This gives rise to an adelic point $Z = (Z_v)_{v \in \Omega_k} \in X(\mathbf{A}_k)$. We will now prove that $Z \in X(\mathbf{A}_k)^{\text{\'et,Br}}$. 

Let $G$ be a finite $k$-group scheme. Let $X' \to X$ be a $G$-torsor. By \cite[Proposition 3.3]{hs}, there exists a $G$-torsor $Y' \to Y$ such that $X \times_Y Y' \to X$ and $X' \to X$ are isomorphic as $G$-torsors. The fibre of $Y' \to Y$ at $M$ is a $k$-torsor, defined by a $1$-cocycle $\sigma$. By twisting both $X' \to X$ and $Y' \to Y$ by $\sigma$ and replacing $G$ by the twisted group $G^\sigma$ if necessary, we may assume, without loss of generality, that $Y'$ has a $k$-point $M'$ which maps to $M$.

Let $Y''$ be the irreducible component of $Y'$ which contains $M'$ and let $X''$ be the inverse image of $Y''$ in $X'$; this is a geometrically integral $k$-variety. For each finite place $v \in \Omega_k$, there exists a $k_v$-point $Z_v' \in X''(k_v)$ which maps to $Z_v$. Since $Y'' \to Y$ is finite and \'etale, we know that if $v \in \Omega_k$ is an infinite place, the image of the connected component of $Y''(k_v)$ containing $M'$ is exactly the connected component of $Y(k_v)$ containing $M$. For each such $v$, we can choose a $k_v$-point $A_v'$ in the connected component of $Y''(k_v)$ containing $M'$ which maps to $A_v$ in $Y(k_v)$. Let $Z_v' \in X''_{A'_v}(k_v)$ be a point which maps to $Z_v \in X_{A_v}(k_v)$. 

We have now constructed an adelic point $Z' = (Z_v')_{v \in \Omega_k} \in X''(\mathbf{A}_k)$ which maps to $Z$. The only thing left to prove is the fact that $Z'$ is orthogonal to the Brauer group $\mathrm{Br}(X')$; it is in fact sufficient to prove that it is orthogonal to $\mathrm{Br}(X'')$. 

Consider the image of $Z'$ in $Y''$; this is an adelic point $z' \in Y''(\mathbf{A}_k)$ with the property that $z_v' = M'$ when $v$ is finite, and $z_v' = A_v'$ when $v$ is infinite. Since $A_v'$ lies in the connected component of $Y''(k_v)$ containing $M'$ for every infinite place $v$, this means that $z'$ lies in the connected component of $Y''(\mathbf{A}_k)$ containing $M'$. Hence $z'$ is orthogonal to $\mathrm{Br}(Y'')$.

By \cite[Proposition 2.1.(c)]{cps}, the map $\mathrm{Br}(Y'') \to \mathrm{Br}(X'')$ induced by $X'' \to Y''$ is surjective. Since $z'$ is orthogonal to $\mathrm{Br}(Y'')$, it follows that $Z'$ is orthogonal to $\mathrm{Br}(X'')$. \end{proof}

With a bit more work, one can also produce a quadric bundle of relative dimension $2$ (instead of $3$) which does the job (for the purpose of Proposition \ref{quadrics}). Since we included Proposition \ref{quadrics} mainly for the simplicity of the construction compared to the one used in Proposition \ref{chatelet}, we will not go into details on this.

\begin{prop} \label{chatelet} Let $k$ be any number field. Let $Y$ be a smooth, projective, geometrically integral $k$-variety. Assume that there exists a standard fibration $f: Y \to \mathbf{P}^1_k$ such that $f(Y(k))$ is finite and non-empty. 

There exists a standard fibration in Ch\^atelet surfaces $g: C \to \mathbf{P}^1_k$ such that the fibre product $X = Y \times_{\mathbf{P}^1_k} C$ is a smooth, projective, geometrically integral $k$-variety for which $X(k) = \emptyset$ and $X(\mathbf{A}_k)^{\emph{\'et,Br}} \neq \emptyset$.  \end{prop}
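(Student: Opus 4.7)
The plan is to follow the structure of the proof of Proposition \ref{quadrics}, replacing the three-dimensional quadric bundle by a fibration in Ch\^atelet surfaces constructed in the spirit of Poonen \cite{poonen}: the local obstruction at a real place is traded for a global Brauer--Manin obstruction on the bad fibres, which removes the requirement that $k$ be real.

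The heart of the matter is to construct, by an interpolation argument modelled on \cite{poonen}, a standard fibration in Ch\^atelet surfaces $g : C \to \mathbf{P}^1_k$ satisfying: (a) $g$ is smooth above the locus where $f$ is not smooth and above every point of $T := f(Y(k))$; (b) for each $P \in T$, the fibre $C_P$ is a Ch\^atelet surface with local points everywhere but $C_P(\mathbf{A}_k)^{\mathrm{Br}} = \emptyset$, the obstruction being witnessed by a single global class $\alpha \in \mathrm{Br}(C)$; and (c) the fibre product $X := Y \times_{\mathbf{P}^1_k} C$ admits an adelic point $Z = (Z_v) \in X(\mathbf{A}_k)$ with $\sum_v j_v(\alpha(Z_v)) = 0$ and whose image $(z_v)$ in $Y(\mathbf{A}_k)$ is compatible with the connected-component Brauer--Manin orthogonality argument used in the proof of Proposition \ref{quadrics}. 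Concretely one takes $C$ to be defined by $y^2 - a z^2 = P(x, t)$ for a coordinate $t$ on $\mathbf{P}^1_k$, and interpolates $a \in k^\times$ and $P \in k[x, t]$ using the Chinese Remainder Theorem and Hensel's lemma so that the specialisations at the points of $T$ yield Ch\^atelet surfaces with the prescribed Brauer obstruction, while leaving the local invariants of $\alpha$ on fibres away from $T$ free to be tuned at will.

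Granted such a fibration, $X$ is nice and $X(k) = \emptyset$ follows immediately, since any rational point would project into $T$ but $C_P(k) = \emptyset$ for every $P \in T$ by (b). To verify $Z \in X(\mathbf{A}_k)^{\text{\'et,Br}}$, the descent argument in the proof of Proposition \ref{quadrics} is essentially reused: for any $G$-torsor $X' \to X$ with $G$ a finite $k$-group scheme, \cite[Proposition 3.3]{hs} yields a $G$-torsor $Y' \to Y$ such that $X \times_Y Y' \cong X'$; after a twist, $M \in Y(k)$ lifts to $M' \in Y''(k)$ in a chosen geometrically integral component $Y'' \subset Y'$; the adelic point $Z$ lifts to $Z' \in X''(\mathbf{A}_k)$ with $X'' := X \times_Y Y''$, whose image $z'$ in $Y''$ sits in the connected component of $M'$. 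For any $\beta \in \mathrm{Br}(X'')$, decompose $\beta$ as a sum of a class pulled back from $\mathrm{Br}(Y'')$ and a ``vertical'' part traceable to $\alpha$ (the Ch\^atelet-bundle analogue of the surjectivity \cite[Proposition 2.1.(c)]{cps} invoked in Proposition \ref{quadrics}); the first part integrates to zero by the connected-component argument applied to $z'$, the second by (c).

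The main technical obstacle is to produce the Ch\^atelet fibration in (a)--(c). One must realise the Brauer--Manin obstruction on every fibre above $T$ through a single global class $\alpha$, and yet keep enough flexibility off $T$ to cancel the global invariant sum of $\alpha$ on an adelic point of $X$ whose image in $Y$ remains compatible with the connected-component orthogonality. This is the content of Poonen's construction in \cite{poonen}, here deployed over $\mathbf{P}^1_k$ with the finite set $T$ playing the role of the rational points of Poonen's higher-genus base curve. Once such a $g$ is in hand, the remainder of the proof is a formal extension of the proof of Proposition \ref{quadrics}.
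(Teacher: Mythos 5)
Your outline has the right target (trade the real-place obstruction of Proposition \ref{quadrics} for Hasse-principle failures of Ch\^atelet fibres), but the way you propose to realise it contains a structural flaw, and the part you defer to ``interpolation'' is precisely where the paper does something different and essential. The paper does not ask for a different Hasse-principle failure above each point of $T = f(Y(k))$, nor for a global class $\alpha \in \mathrm{Br}(C)$ witnessing these failures: it first composes $f$ with a map $\gamma: \mathbf{P}^1_k \to \mathbf{P}^1_k$ sending all of $T$ to $\infty$, then pulls back a single Poonen-style conic bundle $C' \to \mathbf{P}^1_k \times \mathbf{P}^1_k$ (equation $y^2 - az^2 = s$, with $s$ vanishing on a \emph{nice}, i.e.\ geometrically integral, curve $Z'$) whose restriction to $\{\infty\} \times \mathbf{P}^1_k$ is one fixed Ch\^atelet surface $S'$ with $S'(k) = \emptyset$ and $S'(\mathbf{A}_k) \neq \emptyset$, from \cite[Proposition 5.1]{poonen2}. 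The geometric integrality of the (pulled-back) degeneration curve then gives, via \cite[Proposition 2.1.(i)]{cps}, that $\mathrm{Br}(Y) \to \mathrm{Br}(X)$ is an \emph{isomorphism}: the class obstructing rational points on the fibre $S'$ does \emph{not} extend to the total space, and there is no ``vertical part'' of $\mathrm{Br}(X'')$ to decompose. Your claimed splitting of $\mathrm{Br}(X'')$ into a pullback from $\mathrm{Br}(Y'')$ plus a part ``traceable to $\alpha$'' is not the analogue of \cite[Proposition 2.1.(c)]{cps} and is unjustified; it is also unnecessary once the degeneration locus is arranged to be nice, and you never address the smoothness/niceness of $X$ and of the pulled-back degeneration locus, which the paper has to verify.

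More seriously, the mechanism you propose cannot work over the fields the proposition is meant to cover. If the fibre-wise obstruction is carried by a global $\alpha \in \mathrm{Br}(C) \subseteq \mathrm{Br}(X)$, then any adelic point whose finite-place components all lie in the fibre $X_M$ over $M \in Y(k)$ picks up a nonzero invariant sum from $\alpha$; to cancel it you must move some components off the fibres over $T$, and to keep orthogonality to $\mathrm{Br}(Y'')$ you can only do so at archimedean places (the connected-component argument has no analogue at finite places, where the image of $Z'$ in $Y''$ must stay at the rational point $M'$). Over a totally imaginary $k$ the archimedean invariants vanish identically, so no tuning is possible and the sum $\sum_v j_v(\alpha(Z_v))$ stays nonzero: your construction silently reintroduces the dependence on a real place that Proposition \ref{chatelet} exists to remove. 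In the paper's setup none of this arises: since $S'(\mathbf{A}_k) \neq \emptyset$, one may take the entire adelic point inside $X_M \cong S'$, its image in $Y''$ is the rational point $M'$ (orthogonal to all of $\mathrm{Br}(Y'')$ by reciprocity), and surjectivity of $\mathrm{Br}(Y'') \to \mathrm{Br}(X'')$ plus the torsor-descent step via \cite[Proposition 3.3]{hs} finishes the proof exactly as in Proposition \ref{quadrics}.
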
 

\begin{proof} Let $\gamma: \mathbf{P}^1_k \to \mathbf{P}^1_k$ be a morphism which maps the set $f(Y(k))$ to the point at infinity $\infty \in \mathbf{P}^1(k)$ and let $h = \gamma \circ f$. Let $p_1: \mathbf{P}^1_k \times \mathbf{P}^1_k \to \mathbf{P}^1_k$ be the first projection.
For a suitable integer $a > 0$, one can find a section $s \in \Gamma(\mathbf{P}^1_k \times \mathbf{P}^1_k,\mathcal{L}^{\otimes 2})$, where $\mathcal{L}$ is the line bundle $\mathcal{O}(a,2)$ on $\mathbf{P}^1_k \times \mathbf{P}^1_k$, such that the following properties are satisfied:

\begin{itemize}
\item[$-$] the subscheme of $\mathbf{P}^1_k \times \mathbf{P}^1_k$ given by $s = 0$ is a smooth, proper, geometrically integral $k$-curve $Z'$;
\item[$-$] the composition $Z' \hookrightarrow \mathbf{P}^1_k \times \mathbf{P}^1_k \stackrel{p_1}{\longrightarrow} \mathbf{P}^1_k$ is \'etale above the points where $h$ is not smooth;
\item[$-$] in the notation of \cite[\S 4]{poonen}, the equation $y^2 - az^2 = s$ (for some $a \in k^\times$) defines a conic bundle $C'$ over $\mathbf{P}^1_k \times \mathbf{P}^1_k$ such that its restriction to $\{\infty\} \times \mathbf{P}^1_k$ is a Ch\^atelet surface $S'$ with $S'(k) = \emptyset$ and $S'(\mathbf{A}_k) \neq \emptyset$; such a surface exists by \cite[Proposition 5.1]{poonen2}.
\end{itemize} The composition $$C' \to \mathbf{P}^1_k \times \mathbf{P}^1_k \stackrel{p_1}{\longrightarrow} \mathbf{P}^1_k$$ defines a family of Ch\^atelet surfaces over $\mathbf{P}^1_k$; we will then take $C \to \mathbf{P}^1_k$ to be the pullback of this family along $\gamma$. The following diagram, in which all squares are Cartesian, gives a summary of the situation: 

\begin{center}
\begin{tikzpicture}[auto]
\node (L1) {$X$};
\node (L2) [below= 0.9cm of L1] {$Y \times \mathbf{P}^1_k$};
\node (L3) [below= 0.9cm of L2] {$Y$};
\node (M1) [right= 1.8cm  of L1] {$C$};
\node (M2) at (M1 |- L2) {$\mathbf{P}^1_k \times \mathbf{P}^1_k$};
\node (M3) at (M2 |- L3) {$\mathbf{P}^1_k $};
\node (R1) [right= 1.8cm  of M1] {$C'$};
\node (R2) at (R1 |- M2) {$\mathbf{P}^1_k \times \mathbf{P}^1_k$};
\node (R3) at (R2 |- M3) {$\mathbf{P}^1_k$};
\draw[->] (L1) to node {} (M1);
\draw[->] (M1) to node {} (R1);
\draw[->] (L2) to node {\footnotesize $(f,1)$} (M2);
\draw[->] (M2) to node {\footnotesize  $(\gamma,1)$} (R2);
\draw[->] (L1) to node {} (L2);
\draw[->] (M1) to node {} (M2);
\draw[->] (R1) to node {} (R2);
\draw[->] (L2) to node {\footnotesize  $p_1$} (L3);
\draw[->] (M2) to node {\footnotesize  $p_1$} (M3);
\draw[->] (R2) to node { \footnotesize $p_1$} (R3);
\draw[->] (L3) to node { \footnotesize $f$} (M3); 
\draw[->] (M3) to node {\footnotesize $\gamma$} (R3);
\end{tikzpicture}
 \end{center}

Now the curve $$(\gamma,1)^\star Z' \hookrightarrow \mathbf{P}^1_k \times \mathbf{P}^1_k$$ is certainly smooth and projective; since $Z'$ is ample on $\mathbf{P}^1_k \times \mathbf{P}^1_k$ and $\gamma$ is finite, we have that $(\gamma,1)^\star Z'$ is ample on $\mathbf{P}^1_k \times \mathbf{P}^1_k$, hence geometrically connected by \cite[Corollary III.7.9]{hartshorne}. Therefore $(\gamma,1)^\star Z'$ is geometrically integral.
  
  Define $$Z = (h,1)^\star Z' = (\gamma,1)^\star Z' \times_{\mathbf{P}^1_k} Y.$$ The projection $Z \to (\gamma,1)^\star Z'$ is again a standard fibration, since the generic fibre remains geometrically integral after a finite base change. Since the base curve $(\gamma,1)^\star Z'$ is smooth, projective and geometrically integral, this easily implies that the subvariety $Z \hookrightarrow Y \times \mathbf{P}^1_k$ has the same properties, and hence that the total space $X$ of the fibration is smooth. 
  
  Now $Z$ is the locus where the conic bundle $X \to Y \times \mathbf{P}^1_k$ degenerates; since $Z$ is smooth, projective and geometrically integral, \cite[Proposition 2.1.(i)]{cps} implies that the induced map $\text{Br}(Y \times \mathbf{P}^1_k) \to \text{Br}(X)$ is an isomorphism. It follows that the map $\text{Br}(Y) \to \text{Br}(X)$ is an isomorphism as well. 
  
  Since all fibres of $X \to Y$ over a rational point of $Y$ are isomorphic to the same Ch\^atelet surface $S'$ (which fails the Hasse principle), we have $X(k) = \emptyset$. Proving that $X(\mathbf{A}_k)^{\text{\'et,Br}} \neq \emptyset$ can be done in a similar way as in the proof of Proposition \ref{quadrics}; the crucial facts needed for the proof are the surjectivity of $\text{Br}(Y) \to \text{Br}(X)$, and the fact that every torsor over $X$ under a finite group scheme over $k$ arises from such a torsor over $Y$. \end{proof}

\begin{rema} \normalfont \label{pi1} One can mimic the argument given by Poonen in \cite[Lemma 8.1]{poonen} to prove that $\pi_1^{\text{\'et}}(X) \cong \pi_1^{\text{\'et}}(Y)$ in Proposition \ref{chatelet}: the geometric fibres of the map $X \to Y \times \mathbf{P}^1_k$ have no nontrivial \'etale covers, hence the category of finite \'etale covers of $X$ is equivalent to the category of finite \'etale covers of $Y \times \mathbf{P}^1_k$. The same argument applies to $Y \times \mathbf{P}^1_k \to Y$. Hence the induced maps $$\pi_1^{\text{\'et}}(Y) \to \pi_1^{\text{\'et}}(Y \times \mathbf{P}^1_k) \to \pi_1^{\text{\'et}}(X)$$ are isomorphisms. 
\end{rema}

Before we can state and prove the main result of this paper, we need the following rigidity lemma, which applies to families of quadrics and, more generally, to families of unirational varieties, rationally connected varieties, $\dots$

\begin{lemm} \label{geometry} Let $k$ be a perfect field and let $f: Y \to X$ be a standard fibration. Assume that $X$ has trivial Albanese variety, and that the same is true for at least one smooth fibre of $f$. Then the Albanese variety of $Y$ is trivial as well. \end{lemm}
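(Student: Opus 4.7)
The plan is to show that the Albanese morphism $\alpha_Y \colon Y \to A := \mathrm{Alb}(Y)$ is constant; this forces $A = 0$, since the image of the Albanese map generates the target as an algebraic group. The designated smooth fibre $F_0 = f^{-1}(x_0)$ with $\mathrm{Alb}(F_0) = 0$ is contracted by $\alpha_Y$: by the universal property of $\mathrm{Alb}(F_0)$, any morphism $F_0 \to A$ to an abelian variety factors through $\mathrm{Alb}(F_0) = 0$, so $\alpha_Y|_{F_0}$ is constant. My goal is then to show that $\alpha_Y$ factors through $f$ via a morphism $\phi \colon X \to A$, and to use the assumption $\mathrm{Alb}(X) = 0$ to deduce $\phi$ is constant.

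To produce $\phi$, the first step is to upgrade the triviality of the Albanese from the single fibre $F_0$ to the generic fibre $Y_\eta$. Let $X^{\mathrm{sm}} \subseteq X$ be the open, dense locus above which $f$ is smooth; it contains both $x_0$ and the generic point $\eta$. Over $X^{\mathrm{sm}}$, the family $f$ is smooth and proper, so in characteristic zero the Hodge number $h^{0,1}(Y_x) = \dim \mathrm{Alb}(Y_x)$ is locally constant on the connected base $X^{\mathrm{sm}}$; vanishing at $x_0$ therefore gives vanishing at $\eta$, so $\mathrm{Alb}(Y_\eta) = 0$ as a variety over $k(X)$, and $\alpha_Y|_{Y_\eta}$ is constant. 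This constant value defines a point $a_\eta \in A(k(X))$, hence a rational map $\phi \colon X \dashrightarrow A$.

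Since $X$ is smooth and $A$ is an abelian variety, the classical theorem that rational maps from smooth varieties to abelian varieties are everywhere defined upgrades $\phi$ to a morphism $\phi \colon X \to A$. By construction $\alpha_Y$ and $\phi \circ f$ agree on the dense open $Y_\eta$, hence on all of $Y$ by separatedness of $A$. By the universal property of $\mathrm{Alb}(X)$, the morphism $\phi$ factors up to a translation through $\mathrm{Alb}(X) = 0$, so $\phi$ is constant. Therefore $\alpha_Y = \phi \circ f$ is constant, and $\mathrm{Alb}(Y) = 0$. The main obstacle is the reduction to the generic fibre: in characteristic zero the constancy of Hodge numbers in smooth proper families makes this transparent, but over an arbitrary perfect field one needs to argue instead via the relative Picard scheme of $f$ over its smooth locus, which adds a layer of technicality.
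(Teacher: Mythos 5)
Your overall strategy is sound and, in characteristic zero, your argument is essentially complete: contract the distinguished fibre, descend triviality of the Albanese to the generic fibre, factor the Albanese map of $Y$ through $f$ via a rational map $X \dashrightarrow \mathrm{Alb}(Y)$ which extends to a morphism, and kill it using $\mathrm{Alb}(X)=0$. (Two small points you should make explicit: reduce first to $k$ algebraically closed, as the paper does, so that the Albanese \emph{morphism} to the abelian variety exists and the universal property can be invoked without worrying about rational points; this is harmless since $k$ is perfect and triviality of the Albanese may be checked over $\bar k$.)

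However, the lemma is stated over an arbitrary perfect field, and the step you yourself flag is a genuine gap in that generality. Your transfer from the single smooth fibre $F_0$ to the generic fibre rests on ``$\dim \mathrm{Alb}(Y_x)=h^{0,1}(Y_x)$ is locally constant in smooth proper families'', which is a characteristic-zero fact: in characteristic $p$ one only has $\dim \mathrm{Alb}(Y_x)=\dim \mathrm{Pic}^0_{Y_x,\mathrm{red}}\le h^1(Y_x,\mathcal{O}_{Y_x})$, possibly strictly, so $\mathrm{Alb}(F_0)=0$ does not give $h^{0,1}(F_0)=0$, and $h^{0,1}$ is in any case only semicontinuous. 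The substitute you gesture at (``the relative Picard scheme over the smooth locus'') is not carried out and is not routine; the natural characteristic-free repair is rather via $\dim\mathrm{Alb}(Y_x)=\tfrac12 b_1(Y_x)$ and the smooth proper base change theorem for $\ell$-adic cohomology. The paper sidesteps all of this with a more elementary, characteristic-free device: given any morphism $g\colon Y\to A$ to an abelian variety, it looks at the image $Z\subseteq X\times_k A$ of $(f,g)$; the fibre of $Z\to X$ over the point under the distinguished smooth fibre is zero-dimensional, Chevalley semicontinuity of fibre dimension makes $Z\to X$ generically finite, geometric irreducibility of the fibres of $f$ makes it birational, and then $\mathrm{Alb}(X)=0$ forces $g$ to be constant. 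That argument needs no deformation of Hodge-theoretic invariants at all, and it proves constancy of \emph{every} map to an abelian variety at once rather than working with the Albanese morphism of $Y$. If you either restrict your statement to characteristic zero or replace the Hodge-number step by the $b_1$/smooth proper base change argument (or by the paper's image-plus-Chevalley trick), your proof is correct.
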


\begin{proof} We may and will assume, without loss of generality, that $k$ is separably closed. Let $A$ be an abelian variety over $k$ and let $g: Y \to A$ be any morphism; we need to prove that it is constant. Consider the morphism $$(f,g): Y \to X \times_k A$$ and let $Z$ be the image of $Y$ under this morphism. This is an irreducible subvariety of $X \times_k A$ which is proper over $X$. Since a smooth fibre of $f$ has trivial Albanese variety, there exists a point $x_0 \in X$ such that the fibre $Z_{x_0}$ of $p: Z \to X$ is zero-dimensional. By Chevalley's theorem on the semicontinuity of the dimensions of the fibres of a morphism \cite[Ex. 4.3.22]{hartshorne}, $p$ is generically finite. Since its generic fibre is geometrically integral, $p$ is birational. Since $X$ has trivial Albanese variety, so has $Z$; this easily implies the result.  \end{proof}

We can now state our main result:

\begin{theo} Let $k$ be a number field. There exists a smooth, projective, geometrically integral fourfold $X$ over $k$ such that $X(k) = \emptyset$, $X(\mathbf{A}_k)^{\emph{\'et,Br}} \neq \emptyset$ and $\emph{Alb}(X)$ is trivial. \end{theo}

\begin{proof} Apply Proposition \ref{chatelet} with $Y$ equal to a Beauville surface of the type constructed in \S 2, and $f$ equal to either one of the maps $\varphi_{\mathcal{C}}$ and $\varphi_{\mathcal{D}}$; then $f(Y(k))$ is clearly finite. The triviality of the Albanese variety of $X = Y \times_{\mathbf{P}^1_k} C$ follows immediately from Lemma \ref{geometry}.  \end{proof}

\section{Simply connected examples and the $abc$ conjecture}

One of the big challenges in the field is the construction of a smooth, projective and geometrically simply connected variety over a number field such that the failure of the Hasse principle is not accounted for by the \'etale Brauer--Manin obstruction. Sarnak and Wang \cite{sw} proved that there exist hypersurfaces of degree $1130$ in $\mathbf{P}^4_{\mathbf{Q}}$ having this property, assuming Lang's conjectures. Another conditional proof of the existence of a (geometrically) simply connected example can be found in Poonen's paper \cite{poonenshort}. In this paragraph, we will prove that the existence of such a variety follows from the $abc$ conjecture, using Campana's work on orbifolds \cite{campana}.

If a standard fibration $f: X \to \mathbf{P}^1_k$ has at least two double fibres, then $X$ is not simply connected by \cite[Proposition 1.1]{paucity}; if $f$ has at least four double fibres, then $\pi_1^{\text{\'et}}(X)$ is infinite. Indeed, if $C \to \mathbf{P}^1_k$ is a double cover ramified over the points where the fibre of $f$ is a double, then the morphism $X \times_{\mathbf{P}^1_k} C \to X$ is an \'etale double cover. Since $X \times_{\mathbf{P}^1_k} C$ maps dominantly to a higher genus curve, its fundamental group is infinite, and hence so is the fundamental group of $X$. This suggests that one cannot keep the fundamental group of $X$ very small if one wants to use multiple fibres (in the classical sense) to control the image of $f(X(k))$. 

Campana used an alternative notion of multiplicity \cite[\S 1.2.2]{campana} on which he based his own version of the theory of orbifolds. This allows one to circumvent the kind of ``topological obstructions'' encountered above to the existence of multiple fibres, in the non-classical sense. The price one has to pay is that instead of using Faltings' theorem as in the proof of \cite[Corollary 2.2]{paucity}, one now needs to assume an orbifold version of the Mordell conjecture; as we will see in the appendix, this is actually a consequence of the $abc$ conjecture.

Note that for a smooth, projective, geometrically integral and simply connected variety defined over a number field, the \'etale Brauer--Manin set coincides with the classical Brauer--Manin set.

\begin{theo} Assume that the $abc$ conjecture is true. There exist a number field $k$ and a smooth, projective, geometrically integral and geometrically simply connected fourfold $X$ over $k$ such that $X(k) = \emptyset$ and $X(\mathbf{A}_k)^{\emph{Br}} \neq \emptyset$. \end{theo}

\begin{proof} The geometric ingredient is a construction of Campana done in \cite[\S 5]{campana}. He produced a standard fibration $f: S \to \mathbf{P}^1_k$, where $S$ is a smooth, projective, geometrically integral surface of general type which is simply connected, without multiple fibres in the classical sense, which can be defined over a number field $k$. By \cite[Proposition 4.3]{campana}, the image $f(S(k))$ of the set of rational points on $S$ is contained in the set of orbifold rational points $(\mathbf{P}^1_k/\Delta)(k,M) \subseteq \mathbf{P}^1_k$, where $M \subseteq \Omega_k$ is a (sufficiently big) finite set of finite places of $k$ and $\Delta$ is the \emph{orbifold base} \cite[\S 1.2]{campana} of the fibration. In Campana's example, $\Delta$ can be taken to be a $\mathbf{Q}$-divisor of the form $\sum_{i = 1}^m \frac12 P_i$, where $m \geq 5$ and $P_i \in \mathbf{P}^1(k)$ for $1 \leq i \leq m$. 

The crucial point is that this orbifold is \emph{of general type} \cite[\S 1.1 \& Remarque 1.6]{campana}. Campana conjectured in \cite[Conjecture 4.5]{campana} (the ``orbifold Mordell conjecture'') that the set $(\mathbf{P}^1_k/\Delta)(k,M)$ is then finite for any number field $k$ and any finite set of places $M \subseteq \Omega_k$. Assuming this conjecture, Proposition \ref{chatelet} now immediately implies the existence of a family of Ch\^atelet surfaces $X$ over $S$ such that $X(\mathbf{A}_k)^{\text{\'et,Br}} \neq \emptyset$ and $X(k) = \emptyset$. By Remark \ref{pi1}, the fourfold $X$ is simply connected.

Finally, it turns out that Campana's conjecture \cite[Conjecture 4.5]{campana} is a consequence of the $abc$ conjecture; this is proven in a special case in \cite[Remarque 4.8]{campana}, but this case does not cover the one we need here. Abramovich \cite[\S 4.4]{abramovich} suggests using Belyi maps and adapting the proof of Elkies' result \cite{elkies} to prove the general case. Following his suggestions, we will include a complete proof in the appendix, in order to fill this gap in the literature. \end{proof}

\section{Appendix: $abc$ implies ``orbifold Mordell''}

Let $k$ be an arbitrary field. Define an \emph{orbifold curve} $(C/\Delta)$ over $k$ to be a smooth, proper, geometrically connected curve $C$ over $k$, together with a $\mathbf{Q}$-divisor $\Delta$ of the form $$\Delta = \sum_{1 \leq j \leq N} \left(1 - \frac1{m_j}\right)\Delta_j,$$ where $m_j \geq 2$ is an integer and $\Delta_j$ is a prime divisor on $C$ (defined over $k$) for $1 \leq j \leq N$.

The $\mathbf{Q}$-divisor $K_C + \Delta$ is the \emph{canonical divisor} of $(C/\Delta)$; the \emph{Kodaira dimension} of $(C/\Delta)$ is defined as the Iitaka dimension $\kappa(C/\Delta) := \kappa(C,K_C + \Delta)$. An orbifold curve $(C/\Delta)$ is said to be \emph{of general type} \cite[\S 1.1]{campana} if $\kappa(C/\Delta) = 1$; this is the case exactly when the inequality \begin{equation} \label{general} 2g(C) - 2 + \sum_{1 \leq j \leq N} \left(1 - \frac{1}{m_j}\right) \deg \Delta_j > 0 \end{equation} holds, where $g(C)$ is the genus of $C$, as can be easily checked using \cite[Remark 1.6]{campana}.

Campana introduced the notion of \emph{orbifold rational points} for these objects when $k$ is a number field. Let $\mathcal{O}_k$ be the ring of integers of $k$. Fix a model of $(C/\Delta)$ defined over $\mathcal{O}_{k,S}$ for some finite set $S$ of places of $k$; we assume that $S$ contains the places where $C$ has bad reduction. For each $q \in C(k)$ and for each finite place $v$ of $k$, we define the arithmetic intersection number $(q \cdot \Delta_j)_v$ of $q$ and $\Delta_j$ as the biggest integer $m \geq 0$ such that $q$ and $\Delta_j$ have the same image in the reduction of $C$ modulo $p_v^m$ (this depends on the model chosen). The set of orbifold rational points $(C/\Delta)(k,S)$ (which again depends on the model) is then defined as the set of rational points $q \in C(k)$ such that for each finite place $v \not\in S$ and $1 \leq j \leq N$, we have either $(q \cdot \Delta_j)_v = 0$ or $(q \cdot \Delta_j)_v \geq m_j$. 

Campana \cite[Conjecture 4.5]{campana} conjectured the following result (not in this precise form):

\begin{conj} Let $k$ be a number field, let $S$ be a finite set of places of $k$ and let $(C/\Delta)$ be an orbifold curve over $k$ which is of general type. Then $(C/\Delta)(k,S)$ is finite. \end{conj}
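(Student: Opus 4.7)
The plan is to adapt Elkies' deduction of Mordell's conjecture from the $abc$ conjecture, following Abramovich's suggestion, with the extra feature that Campana's orbifold divisibility constraints combine multiplicatively with the ramification indices of a Belyi cover to produce an $abc$-triple with unusually small radical. The reduction reshapes the statement into an $S$-integral Fermat-type problem on $\mathbf{P}^1_k$ with orbifold structure supported on $\{0, 1, \infty\}$.

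First, by Belyi's theorem applied to the finite set $\mathrm{supp}(\Delta)$, after possibly replacing $k$ by a finite extension (harmless since finiteness passes through), I choose a morphism $\beta : C \to \mathbf{P}^1_k$ ramified only over $\{0, 1, \infty\}$ with $\mathrm{supp}(\Delta) \subseteq \beta^{-1}(\{0, 1, \infty\})$. Enlarging $S$ to some finite $S' \supseteq S$ (also harmless, since the orbifold rational point set only grows with $S$), I may assume that $\beta$ has good reduction outside $S'$.

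Next I translate the orbifold condition. For $q \in (C/\Delta)(k, S)$, set $x = \beta(q) \in \mathbf{P}^1(k)$ and write $x = a/c$ in lowest terms with $b := c - a$, for some $a, b, c \in \mathcal{O}_k$. At a finite place $v \notin S'$ where $q$ specializes to $q_0 \in \beta^{-1}(P)$ with $P \in \{0, 1, \infty\}$, the local expansion of $x$ at $q_0$ combined with the orbifold constraint $(q \cdot q_0)_v \in \{0\} \cup [m(q_0), \infty)$ --- where $m(q_0) = m_j$ if $q_0 \in \Delta_j$ and $m(q_0) = 1$ otherwise --- gives $v(x - P) \geq e_{q_0} m(q_0)$ (replacing $x - P$ by $1/x$ when $P = \infty$), where $e_{q_0}$ is the ramification index of $\beta$ at $q_0$. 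Setting $M_P := \min_{q_0 \in \beta^{-1}(P)} e_{q_0} m(q_0)$, the $v$-adic valuations of $a$, $b$ and $c$ at primes outside $S'$ are either zero or at least $M_0$, $M_1$, $M_\infty$ respectively, so
\[ \mathrm{rad}(abc) \ll |a|^{1/M_0} |b|^{1/M_1} |c|^{1/M_\infty} \]
up to a multiplicative constant depending only on $\beta$ and $S'$.

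Applying the $abc$ conjecture over $k$ yields $\max(|a|, |b|, |c|) \leq C_\epsilon \, \mathrm{rad}(abc)^{1+\epsilon}$ for any $\epsilon > 0$, hence
\[ \max(|a|, |b|, |c|)^{\,1 - (1 + \epsilon)(1/M_0 + 1/M_1 + 1/M_\infty)} \ll 1. \]
Since the logarithmic height $h(q)$ satisfies $h(q) \asymp \log \max(|a|, |b|, |c|)$ up to additive constants, a uniform height bound on $q$ follows as soon as $1/M_0 + 1/M_1 + 1/M_\infty < 1$, and finiteness of $(C/\Delta)(k, S)$ then follows from Northcott's theorem.

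The main obstacle is arranging the decisive inequality $1/M_0 + 1/M_1 + 1/M_\infty < 1$. The orbifold Riemann-Hurwitz formula for $\beta$, combined with the general-type condition on $(C/\Delta)$, bounds the \emph{sum} $\sum_{q_0} 1/(e_{q_0} m(q_0))$ taken over all preimages of $\{0, 1, \infty\}$, but it does not immediately control the three minima $M_P$ separately. I would close this gap by Elkies' classical iteration trick: compose $\beta$ with an auxiliary Belyi map $\gamma : \mathbf{P}^1 \to \mathbf{P}^1$ that stabilizes $\{0, 1, \infty\}$ and whose minimum ramification index above each of $0$, $1$, $\infty$ is at least $N$. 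Replacing $\beta$ by $\gamma \circ \beta$ multiplies each $M_P$ by at least the minimum ramification index of $\gamma$ above $P$, so after finitely many iterations the required inequality is forced. In this last step, the orbifold multiplicities $m_j$ enter multiplicatively into every $M_P$, giving a genuine boost over the non-orbifold situation ($m_j = 1$) studied by Elkies, and ensuring that the argument genuinely upgrades Mordell to its orbifold counterpart.
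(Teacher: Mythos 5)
Your reduction (Belyi map plus radical counting plus $abc$) is the same family of ideas as the paper's appendix, and your translation of the orbifold condition into ``valuations of $a,b,c$ outside $S'$ are either $0$ or large'' is sound. The gap is exactly the step you flag at the end: forcing $1/M_0+1/M_1+1/M_\infty<1$, and the proposed fix does not work. A map $\gamma:\mathbf{P}^1\to\mathbf{P}^1$ of degree $d$ ramified only over $\{0,1,\infty\}$ has exactly $d+2$ points in $\gamma^{-1}(\{0,1,\infty\})$ by Riemann--Hurwitz; if every ramification index above each of $0,1,\infty$ were at least $N$, there would be at most $3d/N$ such points, so $N\le 3d/(d+2)<3$. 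Thus the auxiliary maps you want (``minimum ramification index above each of $0,1,\infty$ at least $N$'') do not exist for any $N\ge 3$, and with $N=2$ the caps already give $\sum_P 1/M_P'\ge 3/2$. Composing with cleverer $\gamma$ does not escape this: $(\gamma\circ\beta)^{-1}(P)$ contains $\beta^{-1}(t)$ for every $t\in\gamma^{-1}(P)\setminus\{0,1,\infty\}$, and at such points $\beta$ is unramified and the Campana multiplicity is $1$, so the new minimum in that fibre is capped by $e_\gamma(t)$; since at least $d-1$ of the $d+2$ preimages are such points and the total ramification of $\gamma$ is $2d-2$, a short count shows these caps keep $1/M_0'+1/M_1'+1/M_\infty'\ge 1$ (the extremal configurations have all new points of index exactly $3$, landing precisely on the Euclidean threshold) unless the pre-existing minima already exceeded $3$ --- which begs the question. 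Nor can you arrange good minima for the original $\beta$: requiring $e_{q_0}m(q_0)\ge M_P$ at \emph{every} preimage with $\sum_P 1/M_P<1$ forces, by the same fibre count on $C$, a bound on $\deg\beta$ depending only on $g(C)$ and $\Delta$, whereas a Belyi map whose ramification locus absorbs the prescribed points of $\mathrm{supp}(\Delta)$ has degree dictated by the arithmetic of those points and is in general far larger. So the minimum-based radical bound cannot be salvaged by iteration.

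The paper (following Elkies, as suggested by Abramovich) avoids this by never using fibre-wise minima: the general-type hypothesis is exploited as an \emph{averaged} statement. One introduces the $\mathbf{Q}$-divisors $D_0,D_1,D_\infty$ whose coefficient at a preimage $\delta$ is $1/m_j$ if $\delta\subseteq\mathrm{supp}(\Delta)$ and $1$ otherwise, and Riemann--Hurwitz plus the general-type inequality give $\deg D<\deg f$ for $D=D_0+D_1+D_\infty$. The orbifold condition is used exactly where you use it, but only to bound $\log N_0(f(p))\le h_{D_0}(p)+O(1)$; then $h_{D_0}(p)$ is compared with $\frac{\deg D_0}{\deg f}\,h_{\overline D}(p)$ up to $O(\sqrt{\log H(f(p))})$ via N\'eron's theorem applied to the degree-zero divisor $\deg\overline D\cdot D_0-\deg D_0\cdot\overline D$. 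In this way the large values of $e_{q_0}m(q_0)$ at most preimages compensate for the small ones --- precisely the compensation your bound $\mathrm{rad}(abc)\ll |a|^{1/M_0}|b|^{1/M_1}|c|^{1/M_\infty}$ discards by keeping only the worst point of each fibre. To repair your argument you would have to replace that bound by the height-theoretic one with weights varying over the fibre, at which point you have essentially reproduced the paper's proof.
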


We will prove that this follows from the $abc$ conjecture by adapting Elkies' proof \cite{elkies} of the fact that the $abc$ conjecture implies the ``classical'' Mordell conjecture, i.e. Faltings' theorem. 

Let us first introduce some notation. Given $a,b,c \in k^\star$, define the \emph{height} $$H(a,b,c) = \prod_{v}  \max(\left\|a\right\|_v, \left\|b\right\|_v, \left\|c\right\|_v)$$ where the product ranges over all (normalized) places of $k$, and the \emph{conductor} $$N(a,b,c) = \prod_{v \in I} N(v),$$ where the product ranges over the set of all finite places $v$ for which $$\max(\left\|a\right\|_v, \left\|b\right\|_v, \left\|c\right\|_v) > \min(\left\|a\right\|_v, \left\|b\right\|_v, \left\|c\right\|_v)$$ and $N(v)$ denotes the (absolute) norm of the corresponding prime ideal. 

\begin{conj}[$abc$ conjecture] If $a,b,c \in k^\star$ such that $a + b + c = 0$, then $$N(a,b,c) \gg_{\varepsilon, k} H(a,b,c)^{1 - \varepsilon}$$ for each $\varepsilon > 0$, where the implied constant depends only on $\varepsilon$ and the number field $k$.  \end{conj}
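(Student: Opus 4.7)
The statement is the full $abc$ conjecture over a number field $k$, and I should be upfront that this is a famous open problem; Mochizuki's proposed proof via inter-universal Teichm\"uller theory is not accepted as complete by the wider community, and no other candidate proof exists. What follows is therefore the standard reduction to a height-theoretic statement (Vojta's conjecture) together with a description of where the actual difficulty sits.

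The first step is a dictionary. Given $a+b+c=0$ in $k^\star$, set $t = -a/c \in \mathbf{P}^1(k) \setminus \{0,1,\infty\}$, so that $1-t = b/c$. A direct computation with the product formula gives $\log H(a,b,c) = h(t) + O(1)$, where $h$ is the Weil height, while $\log N(a,b,c)$ equals the truncated counting function $N^{(1)}_D(t)$ up to a bounded error, for $D = [0]+[1]+[\infty]$. Since $\deg(K_{\mathbf{P}^1}+D) = 1$, the conjectured bound is equivalent to the special case $h_{K+D}(t) \leq (1+\varepsilon) N^{(1)}_D(t) + O_{\varepsilon,k}(1)$ of Vojta's arithmetic second main theorem for the pair $(\mathbf{P}^1_k, D)$. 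This reduction is unconditional and purely formal, and brings the problem into the setting where Diophantine approximation machinery applies.

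The second step is to prove this height inequality, and this is where every known approach falls short. In the function field setting the analogous statement is the Mason--Stothers theorem and follows at once from Riemann--Hurwitz applied to the cover $\mathbf{P}^1 \to \mathbf{P}^1$ defined by $t$, the ramification divisor contributing exactly the conductor. A natural plan over number fields would be to construct an Arakelov-theoretic analogue: an auxiliary arithmetic surface together with a Bely\u{\i}-like cover, to which an arithmetic Riemann--Hurwitz formula is applied, with the local archimedean and non-archimedean contributions bounded place-by-place. The main obstacle, and the reason I do not expect to carry this out, is that no arithmetic second main theorem of the required strength is known; the best unconditional bounds due to Stewart--Tijdeman, Stewart--Yu and their successors save only a fractional power and give estimates of the shape $\log H(a,b,c) \ll N(a,b,c)^{1/3 + o(1)}$, very far from the linear dependence predicted. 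The function field proof rests on ``differentiation with respect to the geometric parameter'', which has no number field analogue, so any genuine proof along the lines above must inject a substantially new idea at precisely the point where a sketch is forced to stop.
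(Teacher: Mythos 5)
The statement you were asked about is the $abc$ conjecture itself, which the paper states only as a hypothesis --- it is assumed, never proved, and used in the appendix to deduce Campana's orbifold Mordell conjecture --- so no proof exists in the paper, and you are right that none can currently be given; your reformulation as the case $(\mathbf{P}^1_k, [0]+[1]+[\infty])$ of Vojta's height inequality (with the truncated counting function) is a correct and standard equivalence, not a proof, and you say so honestly. Since the paper likewise offers no argument for this statement, your treatment is consistent with it; the only quibble is the harmless sign slip $1-t=-b/c$ rather than $b/c$, which does not affect the heights involved.
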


Since $H(a,b,c)$ and $N(a,b,c)$ are both scaling invariant, they depend only on the ratio $$r = -a/b \in \mathbf{P}^1(k) \setminus \{0,1,\infty\}.$$ The number $N(a,b,c)$ is precisely the product of the absolute norms of all finite primes of $K$ at which $r$, $r - 1$ or $1/r$ has positive valuation; let $N_0(r)$, $N_1(r)$ and $N_\infty(r)$ be the corresponding products, so that $$N(r) = N_0(r) N_1(r) N_\infty(r).$$

We will now prove, following Elkies,

\begin{theo} Assume that the $abc$ conjecture holds; then Campana's conjecture holds, i.e. if $k$ is a number field, $S$ a finite set of places of $k$ and $(C/\Delta)$ an orbifold curve over $k$  of general type, then the set $(C/\Delta)(k,S)$ is finite. \end{theo}

\begin{proof} As suggested by Abramovich, let $f: C \to \mathbf{P}^1_k$ be a \emph{Belyi map} ramified only above $0$, $1$ and $\infty$ such that $f^{-1}(\{0,1,\infty\})$ contains the support of $\Delta$. Define a $\mathbf{Q}$-divisor on $C$ by $$D_0 = \sum_{f(\delta) = 0} \alpha_\delta \delta,$$ where the sum runs over all prime divisors $\delta$ on $C$ (defined over $k$) contained in $f^{-1}(0)$, $\alpha_\delta = \frac{1}{m_j}$ if $\delta = \Delta_j$ for some $j$, and $\alpha_\delta = 1$ if not. Define $D_1$ and $D_\infty$ similarly. 

Define $D = D_0 + D_1 + D_\infty$. Let $b_f(\delta)$ be the branch number of $f$ at $\delta$; then $$\deg D = 3 \deg f - \sum_{f(\delta) \in \{0,1,\infty\}} b_f(\delta) \deg \delta - \sum_{1 \leq j \leq N} \left(1 - \frac{1}{m_j}\right) \deg \Delta_j.$$ By Riemann-Hurwitz, we have $$\sum_{f(\delta) \in \{0,1,\infty\}} b_f(\delta) \deg \delta = 2 \deg f + 2g(C) - 2$$ and hence $$\deg D = \deg f - 2g(C) + 2 - \sum_{1 \leq j \leq N} \left(1 - \frac{1}{m_j}\right)  \deg \Delta_j < \deg f$$ since $(C/\Delta)$ is of general type (see (\ref{general})).

It will now be sufficient to prove the inequality \begin{equation}\label{key}\log N_0(f(p)) < \frac{\deg D_0}{\deg f} \log H(f(p)) + O\left(\sqrt{\log H(f(p))} + 1\right)\end{equation} for any \emph{orbifold} rational point $p \in C(k) \setminus f^{-1}(0)$. Indeed, together with similar equalities for $N_1(f(p))$ and $N_\infty(f(p))$ this would imply that for any $p \in C(k) \setminus f^{-1}(\{0,1,\infty\})$ which is an orbifold rational point, we have the following inequality: $$\log N(f(p)) < \frac{\deg D}{\deg f} \log H(f(p)) + O\left(\sqrt{\log H(f(p))} + 1\right),$$ with the implied $O$-constant effective and not depending on $p$. This would mean that $f(p)$ gives a counterexample to the $abc$ conjecture as soon as the inequality $$\varepsilon > 1 - \frac{\deg D}{\deg f}$$ holds and $H(f(p))$ is sufficiently large, i.e. for all but finitely many $p$.

Let us therefore focus on proving (\ref{key}). Following Elkies, let us fix for any divisor $c$ on $C$ a logarithmic height function $h_c$ relative to $c$, which is well defined up to $O(1)$. Let us write $\overline{D}$ for the zero divisor of $f$; write $\overline{D} = \sum_{f(\delta) = 0} n_\delta \delta$ where the $\delta$ are distinct prime  divisors of multiplicity $n_\delta$. Hence $$\deg f = \sum_{f(\delta) = 0} n_\delta \deg \delta.$$ We now have $$\log H(f(p)) = h_{\overline{D}}(p) + O(1) = \sum_{f(\delta) = 0} n_\delta h_{\delta}(p) + O(1).$$ Outside of a finite set $S$ of primes of $k$, a prime contributes to $N_0(f(p))$ if and only if it contributes to $h_{\delta}(p)$ for some $\delta \in f^{-1}(0)$. However, if such a prime $v$ contributes to $N_0(f(p))$ and to $h_{\Delta_j}(p)$ for some $j$ with $1 \leq j \leq N$, then its contribution to the value of $N_0(f(p))$ is exactly $N(v)$, whereas its contribution to the value of $h_{\Delta_j}(p)$ is at least $m_j N(v)$, by the definition of an ``orbifold rational point''. Since the contribution of any (archimedean) prime in $S$ to the height relative to an effective divisor is bounded below, we get $$\log N_0(f(p)) < \sum_{f(\delta) = 0} \alpha_\delta h_{\delta}(p) + O(1) = h_{D_0}(p) + O(1),$$ where (as before) $\alpha_\delta =\frac{1}{m_j}$ if $\delta = \Delta_j$ is in the orbifold locus and $\alpha_\delta = 1$ if not.  

One can now finish the argument like in Elkies' paper: it remains to prove that $$h_{D_0}(p) = \frac{\deg D_0}{\deg f} \, h_{\overline{D}}(p) + O(\sqrt{\log H(f(p))} + 1)$$ which is equivalent to $$h_{\Delta}(p) = O(\sqrt{\log H(f(p))} + 1)$$ where $\Delta$ is the divisor of degree zero on $C$ given by $$\Delta = \deg \overline{D} \cdot D_0 - \deg D_0 \cdot \overline{D}.$$ This follows from a theorem of N\'eron \cite[p. 45]{serre}. \end{proof}

\end{document}